\def\ic{\textrm{i}}
\def\h{\hspace{0cm}}
\begin{document}

\title{A modified version of the PRESB preconditioner  for a class of  non-Hermitian  complex systems of linear equations}

\titlerunning{Modified  PRESB preconditioner}        % if too long for running head

\author{Owe Axelsson, Davod Khojasteh Salkuyeh}

%\authorrunning{Short form of author list} % if too long for running head

\institute{
	O. Axelsson \at
	Department of Information Technology, Uppsala University, Uppsala, Sweden\\
    \email{owe.axelsson@it.uu.se} \\
    \\
 \\	
	D. K. Salkuyeh \at
              Faculty of Mathematical Sciences, University of Guilan, Rasht, Iran\\
              Center of Excellence for Mathematical Modelling, Optimization and Combinational
              Computing (MMOCC), University of Guilan, Rasht, Iran\\
              \email{khojasteh@guilan.ac.ir}           %  \\
%             \emph{Present address:} of F. Author  %  if needed
}

\date{Received: date / Accepted: date}
% The correct dates will be entered by the editor

\maketitle

\begin{abstract}
We present a modified version of the PRESB preconditioner  for two-by-two block system of linear equations  with the coefficient matrix 
$$\textbf{A}=\left(\begin{array}{cc}
	F & -G^* \\
	G & F
\end{array}\right),$$ 
where  $F\in\mathbb{C}^{n\times n}$ is Hermitian positive definite and $G\in\mathbb{C}^{n\times n}$ is positive semidefinite. Spectral analysis of the preconditioned matrix is analyzed. In each iteration of  a  Krylov subspace method, like GMRES, for solving the preconditioned system in conjunction with proposed preconditioner two subsystems with Hermitian positive definite coefficient matrix should be solved which can be accomplished exactly using the Cholesky factorization or inexactly utilizing  the conjugate gradient method.  Application of the proposed preconditioner to the systems arising from finite element discretization of  PDE-constrained optimization problems is presented. Numerical results are given to demonstrate   the efficiency of the preconditioner.
	
\keywords{Complex \and preconditioning \and PRESB \and  modified PRESB \and SPD \and GMRES \and CG \and non-Hermitian.}
% \PACS{PACS code1 \and PACS code2 \and more}
 \subclass{65F10, 65F50.}
\end{abstract}

\section{Introduction}
We are concerned with the following two-by-two block system of linear equations (see \cite{Lukas,Zeng,Zheng})
\begin{equation}\label{Eq01}
	\textbf{A}\textbf{x}=
	\left(\begin{array}{cc}
		F & -G^* \\
		G & F
	\end{array}\right)
	\left(\begin{array}{cc}
		x \\ y
	\end{array}\right)=
	\left(\begin{array}{cc}
		p\\ q
	\end{array}\right)=\textbf{b},
\end{equation}
where $F\in\mathbb{C}^{n\times n}$ is Hermitian positive definite (HPD),  $G\in\mathbb{C}^{n\times n}$ is positive semidefinite (PSD) (in general, non-Hermitian),  and  $p,q,x,y\in\mathbb{C}^{n}$  in which $x,y$ are unknowns to be computed.  We recall that the matrix $A\in \mathbb{C}^{n\times n}$ is said to be positive definite (PD)  if  $\Re(x^*Ax)>0$, for every nonzero $x\in \mathbb{C}^n$ (see \cite{BaiBook}). Here, the real part of  a complex number $z$ is denoted by  $\Re(z)$.  Similarly, a matrix $A\in \mathbb{C}^{n\times n}$ is said to be  positive semidefinite (PSD) if  $\Re(x^*Ax)\geq 0$, for every $x\in \mathbb{C}^n$. It is straightforward to prove that the matrix  $A\in \mathbb{C}^{n\times n}$ is PD (resp. PSD) if and only if the matrix $A+A^*$ is HPD (resp. Hermitian positive semidefinite). Naturally, a positive definite matrix which is also Hermitian, is called Hermitian positive definite (HPD). Similarly, a Hermitian positive semidefinite (HPSD) matrix is defined.

We assume that the matrix $\textbf{A}$ is large and sparse. So using  direct methods such as Gaussian elimination to solve the above system can be computationally expensive due to the large number of operations required.  Therefore, for large and sparse systems of linear equations, iterative methods are often the method of choice for obtaining a solution efficiently.

For two matrices  $F,G\in\mathbb{R}^{n\times n}$, and vectors $x,y,p,q \in \mathbb{R}^n$, the complex system
\begin{equation}\label{Eq02}
	(F+\ic G)(x+\ic y)=p+\ic q,
\end{equation}
where $\ic=\sqrt{-1}$ is the imaginary unit, can be equivalently written  in the real form as  
\begin{equation}\label{Eq02New}
	\left(\begin{array}{cc}
		F & -G \\
		G & F
	\end{array}\right)
	\left(\begin{array}{cc}
		x \\ y
	\end{array}\right)=
	\left(\begin{array}{cc}
		p\\ q
	\end{array}\right).
\end{equation}
As we see, in this case, if the matrix $G$ is symmetric positive semidefinite and $F$ is symmetric positive definite, then the  system \eqref{Eq01} is a real equivalent form of  system \eqref{Eq02}.
Equations of the form (\ref{Eq02}) are commonly encountered in scientific computing and engineering applications. For instance, they arise in the numerical solution of the Helmholtz equation and time-dependent partial differential equations (PDEs) \cite{Bertaccini}, diffuse optical tomography \cite{Arridge}, algebraic eigenvalue problems \cite{Moro, Schmitt}, molecular scattering \cite{Poirier}, structural dynamics \cite{Feriani}, and lattice quantum chromodynamics \cite{Frommer}.

In the case when $F$ and $G$ are respectively  symmetric positive definite (SPD) and  symmetric positive semidefinite (SPSD), there are several methods for solving Eqs. \eqref{Eq02} and \eqref{Eq02New}. 
For example, based on the  Hermitian/skew-Hermitian splitting (HSS) method \cite{Bai1}, Bai et al. in \cite{Bai2}   presented a modified version of the HSS iterative method, called MHSS,  to solve systems of the form (\ref{Eq02}).  Next, Bai et al. in \cite{Bai3} established a preconditioned version of the MHSS method for solving the system \eqref{Eq02New}.   Salkuyeh et al. in \cite{Salkuyeh1} solved the system (\ref{Eq02New}) by the generalized successive overrelaxation (GSOR) iterative method and then Hezari et al. in \cite{Hezari-NLA} proposed a preconditioned version of the GSOR method.  
The scale-splitting (SCSP) iteration method for solving  (\ref{Eq02}) was presented by Hezari et al. in \cite{SCSP}. 
Using the idea of the SCSP iteration method, Salkuyeh in \cite{TSCSP} set up a two-step scale-splitting (TSCSP) for solving Eq. (\ref{Eq02}) and then Salkuyeh and Siahkolaei in \cite{TTSCSP} introduced the two-parameter version of the TSCSP method. 
A similar method to TSCSP, called Combination method of Real part and Imaginary part (CRI),  was presented by Wang et al. in \cite{CRI}.
The transformed matrix preconditioner (TMP) was presented by Axelsson and Salkuyeh in \cite{Axe4}. In a sequence of  papers, Axelsson et al. 
presented the PRESB (Preconditioned Square Block) preconditioner for the system \eqref{Eq02New} which is written as
\begin{equation}\label{Psym}
	\mathbf{P}=		\left(\begin{array}{cc}
		F & -G \\
		G & F+2G
	\end{array}\right).
\end{equation}
The PRESB preconditioner has  favorable properties\cite{Axe1,Axe2,Axe3,Axe4,Lukas,Owe-Janos,AKN}. The eigenvalues of the preconditioned matrix $\mathbf{P}^{-1}\mathbf{A}$ are clustered in $[\frac{1}{2},1]$. It is well-known  that the rate of convergence of the conjugate gradient (CG) method depends on eigenvalue distribution of the coefficient matrix, because  the spectral condition number of the coefficient matrix is the ratio of its largest eigenvalue to the smallest one. However, as mentioned for example in \cite{BenziJCP}, for
non-symmetric (non-normal) problems  the eigenvalues may not describe the convergence of the iterative methods. Nevertheless, a clustered spectrum (away from 0) often leads to rapid convergence. For more details see \cite{AxK,BenziFV,EES}.
So the Krylov subspace methods like GMRES would be suitable for solving the preconditioned  system $\mathbf{P}^{-1} \mathbf{A}\mathbf{x}=\mathbf{P}^{-1}\mathbf{b}$. It can be seen that in each iteration of a Krylov subspace method, a linear system of equations with the coefficient matrix $\mathbf{P}$ should be solved, which can be accomplished by solving two subsystems with the coefficient matrix   $F+G$. Since the matrix $F+G$ is HPD, the corresponding system can be solved exactly using the Cholesky factorization or inexactly using the CG method.   

In the case that $F$ is HPD and $G$ is non-Hermitian positive definite, the PRESB preconditioner to the system \eqref{Eq01} is modified as
	\begin{equation}\label{Eq05}
		\mathbf{Q} = \left(\begin{array}{cc} 
			F  &  -G^* \\ 
			G  &  F+G+G^* \end{array}\right).
	\end{equation}
	Theoretical results given in \cite{Owe-Janos} show that, in this case, the  
	eigenvalues of $\mathbf{Q}^{-1}\mathbf{A}$ are included in the interval $[\frac{1}{2},1]$. It is not difficult  to see that 
	the matrix $\mathbf{Q}$ can be decomposed as
	\begin{equation}\label{Eq07a}
		\mathbf{Q} = \left(\begin{array}{cc} I & -I \\ 0 & I \end{array}\right)
		\tilde{\mathbf{Q}}
		\left(\begin{array}{cc} I & I \\ 0 & I \end{array}\right),
	\end{equation}
	where
	$$
	\tilde{\mathbf{Q}}=\left(\begin{array}{cc} F+G & 0 \\ G & F+G^*\end{array}\right).
	$$
	Considering  Eq. \eqref{Eq07a}, we  see that in the implementation of the preconditioner $\textbf{Q}$ in a Krylov subspace method, we have to solve two subsystems with the coefficient matrices $F+G$ and $F+G^*$. Obviously, these matrices are non-Hermitian  positive definite and we can not employ the CG method for solving the corresponding systems.
	To overcome this drawback, based on the PRESB preconditioner we propose a preconditioner to  the system \eqref{Eq01} with the same implementation properties of  the preconditioner $\mathbf{P}$ given in \eqref{Psym}. Hereafter, the proposed preconditioner is called MPRESB for ``modified PRESB''.

Throughout the paper we use the following notations. For a complex number $z$, the imaginary part of $z$ is denoted by $\Im(z)$. 
For the imaginary unit, we use $\rm{i}=\sqrt{-1}$.
For a matrix $A\in\mathbb{C}^{m\times n}$, we use $A^*$ for the conjugate transpose of $A$. 
We use $\textrm{cond}(A)$ for the condition number of a  nonsingular  matrix $A$ which is defined by $\textrm{cond}(A)=\|A\|\|A^{-1}\|$, in which $\|.\|$ is a matrix norm.  For an square matrix $A$ with real eigenvalues, the largest and the smallest eigenvalues of $A$ are denoted by $\lambda_{\max}(A)$ and $\lambda_{\min}(A)$, respectively. For two vectors $x,y\in\mathbb{C}^{n}$, the \textsc{Matlab} notation $[x;y]$ is used for $[x^*,y^*]^*$.

This paper is organized as follows. In the next section we extend the PRESB preconditioner to the case when the matrix $G$ is non-Hermitian. Application of the proposed preconditioner to a model problem is given in Section \ref{Sec3}. Numerical experiments are presented in Section \ref{Sec4}. Finally, concluding remarks are given in Section \ref{Sec5}

\section{The MPRESB preconditioner}\label{Sec2}

We propose the preconditioner 
$$
\mathbf{R} = \left(\begin{array}{cc}
	F & -\frac{1}{2}\left(G+G^*\right) \\ \frac{1}{2}\left(G+G^*\right) & F + \left(G+G^*\right)
\end{array}\right),
$$
to the system \eqref{Eq01}.  This matrix can be factorized as
\begin{equation}\label{Eq08}
	\mathbf{R} = 
	\left(\begin{array}{cc} I & -I \\ 0 & I \end{array}\right)
	\tilde{\mathbf{R}}
	\left(\begin{array}{cc} I & I \\ 0 & I \end{array}\right),
\end{equation} 
in which 
\[
\tilde{\mathbf{R}}=\left(\begin{array}{cc} F+\frac{1}{2}\left(G+G^*\right) & 0 \\ \frac{1}{2}\left(G+G^*\right) & F+\frac{1}{2}\left(G+G^*\right) \end{array}\right)
\]
Hence, in the implementation of the preconditioner $\mathbf{R}$
two subsystems with the coefficient matrix  $F+\frac{1}{2}\left(G+G^*\right)$  should be solved. Since this matrix is HPD, we can use the CG method or the Cholesky method to solve the corresponding system. 

\begin{theorem}\label{Thm2}
	If $\lambda$ is an eigenvalue of $\mathbf{R}^{-1}\mathbf{Q}$, then 
	$\Re(\lambda)=1$
	and
	$$
	\Im (\lambda) = \pm\frac{1}{2\ic} \frac{x^* \left(G^* - G\right) x}{x^* \left(F+\frac{1}{2}\left(G+G^*\right)\right)x},
	$$
	where $x\in \mathbb{C}^{n}$ is a nonzero vector.
\end{theorem}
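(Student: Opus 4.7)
The plan is to exploit the shared LU-type factorizations of $\mathbf{R}$ and $\mathbf{Q}$ to reduce the spectrum of $\mathbf{R}^{-1}\mathbf{Q}$ to that of a block lower triangular matrix. Equation \eqref{Eq08} gives $\mathbf{R} = L \tilde{\mathbf{R}} U$, and a direct check (exactly as in \eqref{Eq07a}) shows $\mathbf{Q} = L \tilde{\mathbf{Q}} U$ with the same factors
\[
L = \left(\begin{array}{cc} I & -I \\ 0 & I \end{array}\right),\qquad U = \left(\begin{array}{cc} I & I \\ 0 & I \end{array}\right).
\]
Therefore $\mathbf{R}^{-1}\mathbf{Q} = U^{-1}\bigl(\tilde{\mathbf{R}}^{-1}\tilde{\mathbf{Q}}\bigr)U$, and it suffices to analyze the spectrum of $\tilde{\mathbf{R}}^{-1}\tilde{\mathbf{Q}}$.

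Next I would split $G$ into its Hermitian and skew-Hermitian parts, setting $M = \tfrac{1}{2}(G+G^*)$, $S = \tfrac{1}{2}(G-G^*)$, and $H = F + M$; then $H$ is HPD and $S$ is skew-Hermitian. Writing $G = M+S$, $G^* = M-S$, and $F+G+G^* = F+2M$, the factors become
\[
\tilde{\mathbf{Q}} = \left(\begin{array}{cc} H+S & 0 \\ M+S & H-S \end{array}\right),\qquad
\tilde{\mathbf{R}} = \left(\begin{array}{cc} H & 0 \\ M & H \end{array}\right),
\]
both block lower triangular. Hence $\tilde{\mathbf{R}}^{-1}\tilde{\mathbf{Q}}$ is block lower triangular with diagonal blocks $I + H^{-1}S$ and $I - H^{-1}S$, and its spectrum is the union of the spectra of these two blocks.

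To conclude, if $\mu$ is an eigenvalue of $H^{-1}S$ with eigenvector $x \neq 0$, then $Sx = \mu Hx$, and premultiplying by $x^*$ yields $\mu = x^*Sx/(x^*Hx)$. Since $S$ is skew-Hermitian, $x^*Sx$ is purely imaginary while $x^*Hx > 0$ is real, so $\mu$ is purely imaginary; the eigenvalues of $-H^{-1}S$ are the negatives of those of $H^{-1}S$, producing the ``$\pm$'' in the statement. Thus every eigenvalue $\lambda$ of $\mathbf{R}^{-1}\mathbf{Q}$ has the form $\lambda = 1 \pm x^*Sx/(x^*Hx)$, giving $\Re(\lambda) = 1$; substituting $x^*Sx = -\tfrac{1}{2}x^*(G^*-G)x$ and $H = F+\tfrac{1}{2}(G+G^*)$, and dividing by $\ic$ to extract the imaginary part, yields exactly the claimed expression for $\Im(\lambda)$.

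No substantive obstacle is anticipated; the only care required is in verifying the (not explicitly stated) factorization $\mathbf{Q} = L\tilde{\mathbf{Q}}U$ and in tracking the sign so that the ``$\pm$'' in the theorem matches the choice of diagonal block $I \pm H^{-1}S$.
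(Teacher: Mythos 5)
Your proposal is correct and follows essentially the same route as the paper: both exploit the shared factors $L$ and $U$ to reduce $\mathbf{R}^{-1}\mathbf{Q}$ to the block lower triangular matrix $\tilde{\mathbf{R}}^{-1}\tilde{\mathbf{Q}}$, take the union of the spectra of its two diagonal blocks, and evaluate the resulting Rayleigh quotients using the fact that $x^*(G-G^*)x$ is purely imaginary while $x^*\bigl(F+\tfrac{1}{2}(G+G^*)\bigr)x>0$. Your explicit introduction of $H$, $M$, $S$ is only a notational repackaging of the paper's splitting $F+G=\bigl(F+\tfrac{1}{2}(G+G^*)\bigr)+\tfrac{1}{2}(G-G^*)$, and the factorization $\mathbf{Q}=L\tilde{\mathbf{Q}}U$ you flag as unstated is in fact the paper's Eq.~\eqref{Eq07a}.
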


\begin{proof}
	Using Eqs. \eqref{Eq07a} and  \eqref{Eq08} we get
	\[
	\mathbf{R}^{-1} \mathbf{Q} = 
	\left(\begin{array}{cc} I & I \\ 0 & I \end{array}\right)^{-1}
	\tilde{\mathbf{R}}^{-1} \tilde{\mathbf{Q}}
	\left(\begin{array}{cc} I & I \\ 0 & I \end{array}\right).
	\]
	This relation shows that the matrices $\mathbf{R}^{-1} \mathbf{Q}$ and $\tilde{\mathbf{R}}^{-1} \tilde{\mathbf{Q}}$ are similar and their eigenvalues are the same. On the other hand, it is easy to see that  $\tilde{\mathbf{R}}^{-1} \tilde{\mathbf{Q}}$	is of the form
	\[
	\tilde{\mathbf{R}}^{-1} \tilde{\mathbf{Q}}=
	\left(\begin{array}{cc} \left(F+\frac{1}{2}\left(G+G^*\right)\right)^{-1}(F+G) & 0 \\ 
		Z & \left(F+\frac{1}{2}\left(G+G^*\right)\right)^{-1}\left(F+G^*\right) 
	\end{array}\right),
	\]
	in which $Z$ is an 	$n \times n $ matrix. Hence, the spectrum of  $\mathbf{R}^{-1} \mathbf{Q}$ is the union of the spectrum of  matrices $W$ and $S$, where
	\begin{eqnarray}
		W &=& \left(F+\frac{1}{2}\left(G+G^*\right)\right)^{-1}(F+G),\\
		V &=& \left(F+\frac{1}{2}\left(G+G^*\right)\right)^{-1}(F+G^*).
	\end{eqnarray}
	Now let  $(\lambda,x)$ be an eigenpair of  $W$. Then, we have
	\[
	\left(F+G\right)x=\lambda \left(F+\frac{1}{2}\left(G+G^*\right)\right)x,
	\] 
	or
	\[
	\left( F+\frac{1}{2}\left(G+G^*\right)+\frac{1}{2}\left(G-G^*\right)\right)x=\lambda \left(F+\frac{1}{2}\left(G+G^*\right)\right)x.
	\]
	Premultiplying both sides of this equation by $x^*$  we arrive at
	$$
	\lambda = 1- \frac{1}{2}\frac{   x^* \left(G^* - G\right) x}
	{x^*\left(F+\frac{1}{2}\left(G+G^*\right)\right)x}.
	$$
	Similarly, we can see that  the eigenvalues of $V$ are of the form
	$$
	\lambda = 1+ \frac{1}{2}\frac{   x^* \left(G^* - G\right) x}
	{x^* \left(F+\frac{1}{2}\left(G+G^*\right)\right)x}.
	$$
	Now, having in mind that 
	\begin{eqnarray*}
		x^* \left(F+\frac{1}{2}\left(G+G^*\right)\right)x&>&0,\\
		\Re\left(x^* \left(G^* - G\right) x\right) &=&0, 
	\end{eqnarray*}		
	the desired result is obtained. 
\end{proof}	

\begin{remark}
	As we know the matrix $G$ can be written as $G=H+S$ where 
	$$H=\frac{1}{2}\left(G+G^*\right)\quad 	\textrm{and} \quad
	S=\frac{1}{2}\left(G-G^*\right),$$
	which are the Hermitian and skew-Hermitian part of $G$, respectively. 
	In practice, we assume that  the norm of the  skew-Hermitian part of $G$  is small. In this case the eigenvalues of 
	$\mathbf{R}^{-1} \mathbf{Q}$
	are clustered around $1+0 \textrm{i}$ and we expect that the matrix is well-conditioned.
\end{remark}

Now, let the norm of skew-Hermitian part of $G$ is small and  $\mathbf{R}$ is applied as a preconditioner to the system
\eqref{Eq01}.
In this case, we have
$$\mathbf{R}^{-1} \mathbf{A}=
\left(\mathbf{R}^{-1}\mathbf{Q}\right) \left(\mathbf{Q}^{-1} \mathbf{A}\right).$$ 
Since the eigenvalues of
$\mathbf{R}^{-1} \mathbf{Q}$
are clustered around $1+0\textrm{i}$ and the eigenvalues of
$\left(\mathbf{Q}^{-1} \mathbf{A}\right)$
are clustered in the interval $[0.5,1]$, we expect that the matrix 
$\mathbf{R}^{-1} \mathbf{A}$
is well-conditioned. Note that
$$\textrm{cond
}\left(\mathbf{R}^{-1} \mathbf{A}\right)\leq \textrm{cond}\left(\mathbf{R}^{-1}\mathbf{Q}\right)  \textrm{cond}\left(\mathbf{Q}^{-1} \mathbf{A}\right).$$

\section{A test example}\label{Sec3}

Consider the problem of minimizing the cost functional
\cite{Axe3,Krendl,Liang,Salkuyeh-Calcolo} 
\begin{equation}\label{EqOptim}
	{\cal G}(y,u)=\frac{1}{2}\int_0^T\int_\Omega  |y(x,t)-y_d(x,t)|^2dxdt +\frac{\nu}{2}\int_0^T\int_\Omega |u_c(x,t)|^2dxdt,
\end{equation}
involving computing the state $y(x, t)$ and the controller function $u_c(x, t)$, under the conditions 
\begin{eqnarray}
	\nonumber \frac{\partial}{\partial t}y(x,t)-\Delta y(x,t)&\h=\h&u_c(x,t)~~ \text{in}~~ \Omega \times (0,T),~~~~~~~~~~~~~~~~~~~\label{EqMain}\\
	\nonumber y(x,t) &\h=\h& 0 \hspace{0.75cm}~~~\text{on}~~ \partial Q\times (0,T),\hspace{2.45cm} \\
	\nonumber y(x,0)&\h=\h& y(x,T) ~~\text{on} ~~\partial \Omega,\hspace{2.2cm}\\
	\nonumber u_c(x,0)&\h=\h& u_c(x,T)~~ \text{in}~~ \Omega ,\label{Eq0005} \hspace{2.6cm}
\end{eqnarray}
where $\Omega$ is  bounded open domain  in $\mathbb{R}^d\,(d=1,2,3)$ with the boundary of $\Omega$, $\partial \Omega $,  being Lipschitz-continuous.  Here, we  have $T>0$, and the function $y_d(x,t)$ is the desired state.  In addition, $\nu$ is a regularization parameter. 
Assuming that the desired state $y_d(x,t)$ is time-harmonic, and then  implementing the discretize-first-then-optimize approach, a system of linear equations of the form 
\begin{equation}\label{RC1}
	\mathbf{A}{\bf x} =\mathbf{b},
\end{equation}
is obtained, where 
\begin{equation}\label{RC1}
	\mathbf{A} =
	\left(\begin{array}{cc}
		M & -\sqrt{\nu}(K-\ic\omega M) \\
		\sqrt{\nu}(K+\ic\omega M)      & M 
	\end{array}\right),
\end{equation}
in which real matrix $M$ represents the mass matrix and $K$ symbolizes the discretized negative Laplacian (for more details, see \cite{Axe3,Krendl,Liang,Salkuyeh-Calcolo}).

If we set  
$$F=M~~ \textrm{and}~~G=\sqrt{\nu}(K+\ic\omega M),$$ 
then we see that
\[
G+G^*=\sqrt{\nu}(K+\ic\omega M)+ \sqrt{\nu}(K-\ic\omega M)=2\sqrt{\nu}K,
\]
which is SPD. So the proposed preconditioner can be written as
\begin{equation}\label{RC2}
	\mathbf{R} = \left(\begin{array}{cc}
		M & -\sqrt{\nu}K \\ 
		\sqrt{\nu}K & M + 2\sqrt{\nu}K
	\end{array}\right),
\end{equation}
which is a real matrix. Also, the matrix $\mathbf{Q}$ takes the following form
\begin{equation} \label{Qopt}
	\mathbf{Q}=\left(\begin{array}{cc}
		M & -\sqrt{\nu}(K-\ic\omega M) \\
		\sqrt{\nu}(K+\ic\omega M)      & M+2\sqrt{\nu}K 
	\end{array}\right).
\end{equation}

\begin{theorem}\label{Thm3}
	Let $\lambda$ be an eignevalue of $\mathbf{R}^{-1}\mathbf{Q}$, where $\mathbf{R}$ and $\mathbf{Q}$ are defined in Eqs. \eqref{RC2} and \eqref{Qopt}, respectively. Then $\Re(\lambda)=1$ and     
	\[
	\frac{\sqrt{\nu} \omega}{1+\sqrt{\nu} \lambda_{\max}(S)} \leq    |\Im(\lambda)| \leq \frac{\sqrt{\nu} \omega}{1+\sqrt{\nu} \lambda_{\min}(S)},
	\]
	where  $S=M^{-\frac{1}{2}} K M^{-\frac{1}{2}}$, respectively. 
\end{theorem}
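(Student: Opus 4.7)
The plan is to specialize Theorem \ref{Thm2} to the matrices $F = M$, $G = \sqrt{\nu}(K+\ic\omega M)$ arising from the model problem, and then reduce the resulting Rayleigh quotient to a symmetric eigenvalue problem involving $S = M^{-1/2}KM^{-1/2}$. The identity $\Re(\lambda)=1$ is immediate from Theorem \ref{Thm2} since that statement makes no use of the specific form of $F$ and $G$, so only the two-sided bound on $|\Im(\lambda)|$ needs work.

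First I would compute the Hermitian and skew-Hermitian pieces of $G$ explicitly. Because $K$ and $M$ are real symmetric we obtain
\[
G+G^* = 2\sqrt{\nu}\,K, \qquad G^*-G = -2\ic\sqrt{\nu}\,\omega M,
\]
so $F+\tfrac12(G+G^*) = M+\sqrt{\nu}K$ is HPD and $\tfrac{1}{2\ic}(G^*-G) = -\sqrt{\nu}\omega M$ is Hermitian. Plugging these into the formula of Theorem \ref{Thm2} gives, for some nonzero $x\in\mathbb{C}^n$,
\[
|\Im(\lambda)| = \frac{\sqrt{\nu}\,\omega\, x^*Mx}{x^*(M+\sqrt{\nu}K)x}.
\]

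Next I would perform the change of variables $y = M^{1/2}x$ to symmetrize the quotient. The numerator becomes $\sqrt{\nu}\,\omega\,\|y\|^2$ and the denominator becomes $y^*(I+\sqrt{\nu}\,S)y$, where $S = M^{-1/2}KM^{-1/2}$ is HPD. Hence
\[
|\Im(\lambda)| = \frac{\sqrt{\nu}\,\omega}{\,1+\sqrt{\nu}\,\dfrac{y^* S y}{y^*y}\,}.
\]
Applying the Courant--Fischer (Rayleigh quotient) characterization, $\lambda_{\min}(S) \le (y^*Sy)/(y^*y) \le \lambda_{\max}(S)$, and inverting the monotone map $t \mapsto \sqrt{\nu}\,\omega/(1+\sqrt{\nu}\,t)$ (which is decreasing in $t$) yields the two-sided bound in the theorem.

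I do not foresee a serious obstacle: the argument is mainly bookkeeping, using Theorem \ref{Thm2} as a black box together with the fact that $G^*-G$ is purely imaginary times a real symmetric matrix. The one point requiring a little care is checking that the denominator is strictly positive so that the inversion is valid; this follows because $M$ is SPD and $K$ is SPSD (the discretized negative Laplacian with homogeneous Dirichlet data), hence $I+\sqrt{\nu}\,S$ is SPD and the quotient is well defined.
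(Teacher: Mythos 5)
Your proposal is correct and follows essentially the same route as the paper's own proof: compute $G^*-G=-2\ic\sqrt{\nu}\,\omega M$, specialize the formula of Theorem \ref{Thm2}, substitute $y=M^{1/2}x$ to obtain the Rayleigh quotient of $S$, and apply Courant--Fischer together with the monotonicity of $t\mapsto \sqrt{\nu}\omega/(1+\sqrt{\nu}t)$. Your added remark on the strict positivity of the denominator is a small but welcome point of care that the paper leaves implicit.
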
	
	\begin{proof}
		We first see that 
		\begin{eqnarray}
			G^*-G &=& \sqrt{\nu}(K-\ic\omega M)- \sqrt{\nu}(K+\ic\omega M)  \nonumber \\
			             &=& -2\ic\sqrt{\nu}\omega M. \label{RC3}
		\end{eqnarray}
		So, using Theorem \ref{Thm2} we have $\Re(\lambda)=1$ and 
		\begin{eqnarray}
			\Im (\lambda) &=& \pm  \sqrt{\nu} \omega \frac{x^*Mx}{x^*Mx+\sqrt{\nu} x^*Kx} \nonumber\\
			&=& \pm  \sqrt{\nu} \omega \frac{1}{1+\sqrt{\nu} \frac{  y^* M^{-\frac{1}{2}}KM^{-\frac{1}{2}}y}{y^*y}} \nonumber\\
			&=& \pm  \sqrt{\nu} \omega \frac{1}{1+\sqrt{\nu} \frac{  y^*Sy}{y^*y}}. \label{RC}
		\end{eqnarray}
		where $y=M^{\frac{1}{2}}x$, and the matrix  $S=M^{-\frac{1}{2}} K M^{-\frac{1}{2}}$ is SPD . Now, using the Courant-Fischer theorem \cite{SaadBook}, we have
		\[
		0\leq\lambda_{\min}(S)\leq \frac{  y^*Sy}{y^*y} \leq \lambda_{\max}(S),
		\]
		which gives the desired result.
	\end{proof}

Theorem \ref{Thm3} shows that for small values of $\sqrt{\nu} \omega$, the eigenvalues of  $\mathbf{R}^{-1}\mathbf{Q}$ are well-clustered around $1+0\textrm{i}$. 

In each iteration of a Krylov subspace method, like GMRES, for solving the preconditioned system 
\[
\mathbf{R}^{-1} \mathbf{A}{\bf x} = \mathbf{R}^{-1} \mathbf{b},
\]
a system of linear equations of the form 
\begin{equation}
	\mathbf{R} 	 \left(\begin{array}{cc}
		r \\ 
		s
	\end{array}\right)
	= \left(\begin{array}{cc}
		M & -\sqrt{\nu}K \\ 
		\sqrt{\nu}K & M + 2\sqrt{\nu}K
	\end{array}\right)
	\left(\begin{array}{cc}
		r\\ 
		s
	\end{array}\right)
	=
	\left(\begin{array}{cc}
		p\\ 
		q
	\end{array}\right),
\end{equation}
should be solved, which can be accomplished using the following algorithm (see e.g. \cite{Axe1,AKM}). 

\medskip

\noindent\textbf{Algorithm 1}. Solution of $\mathbf{R}[r;s]=[p;q]$\\[0.2cm]
1. Solve $(M+\sqrt{\nu}K)(r+s)=p+q$ for $r+s$; \\[0.2cm]
2. Solve $(M+\sqrt{\nu}K)s=q-\sqrt{\nu}K(r+s)$ for $s$; \\[0.2cm]
3. Compute $r=(r+s)-s$.\\

From the above algorithm we see that in each iteration we have to solve two subsystems with coefficient matrix $M+\sqrt{\nu}K$ which is SPD. So these systems can be solved directly using the Cholesky factorization or inexactly using the CG method.  

\section{Numerical experiments}\label{Sec4}

We consider the problem \eqref{EqOptim} in two-dimensional case with $\Omega=(0,1)^2$ and
\begin{equation}\label{Eq20}
	y_d(x,y)=
	\left\{
	\begin{array}{cl}
		(2x-1)^2(2y-1)^2,&\textrm{if}~~(x,y)\in (0,\frac{1}{2})\times (0,\frac{1}{2}), \\
		0,&\textrm{otherwise},
	\end{array} \right.
\end{equation}
and in three-dimensional case with $\Omega=(0,1)^3 $ and
\begin{equation}\label{Eq20}
	y_d(x,y,z)=
	\left\{
	\begin{array}{cl}
		(2x-1)^2(2y-1)^2(2z-1)^2,&\textrm{if}~~(x,y,z)\in (0,\frac{1}{2})\times (0,\frac{1}{2})\times(0,\frac{1}{2}) , \\
		0,&\textrm{otherwise}.
	\end{array} \right.
\end{equation}

The problem is discretized using the bilinear quadrilateral {\bf Q1} finite elements with a uniform mesh \cite{Elman}. To
generate the system \eqref{RC1} we have used the \textsc{Matlab} codes of the paper \cite{Rees} which is available at 
https://www.numerical.rl.ac.uk/people/rees/.
We compare the numerical results of the proposed method (denoted by  $\mathbf{R}_{MPRESB}$) with the 
block diagonal (BD) preconditioner  \cite{Krendl}
\[
\mathbf{P}_{BD} =\left(\begin{array}{cc}
	(1+\omega \sqrt{\nu})M + \sqrt{\nu}K & 0 \\ 
	0 & (1+\omega \sqrt{\nu})M + \sqrt{\nu}K
\end{array}\right), 
\]
and the block alternating splitting (BAS) preconditioner \cite{Zheng}
\[
\mathbf{P}_{BAS} = (1+\alpha) J(\alpha)\left(\begin{array}{cc}
	\alpha M+\sqrt{\nu} K & 0 \\ 
	0 & \alpha M+\sqrt{\nu} K
\end{array}\right),
\] 
in which
\[
J(\alpha)=\frac{1}{\alpha(2+\nu\omega^2)} \left(\begin{array}{cc}
	I &  (1+\nu \omega^2-\ic\omega \sqrt{\nu}) I \\ 
	(1+\nu \omega^2+\ic\omega \sqrt{\nu}) I &  -I
\end{array}\right).
\]
As suggested in \cite{Zheng}, for the BAS preconditioner the parameter $\alpha$ is set to be $$\alpha=\frac{1+\nu \omega^2}{1+\omega \sqrt{\nu}}.$$  
We use the restarted version of GMRES(20) for solving  the system \eqref{RC1} in conjunction with the proposed preconditioner (denoted by ), PRESB preconditioner $\mathbf{Q}$ given in \eqref{Eq05} (hereafter, is denoted by $\mathbf{Q}_{PRESB}$),   $\mathbf{P}_{BD}$ and  $\mathbf{P}_{BAS}$. We utilize right preconditioning.  In the implementation of the  proposed preconditioner, $\mathbf{P}_{BD}$ and  $\mathbf{P}_{BAS}$, two systems with the coefficient matrices $M+\sqrt{\nu}K$, $(1+\omega \sqrt{\nu})M + \sqrt{\nu}K$ and $\alpha M+\sqrt{\nu}K$ should be solved, respectively. 
Since all of these matrices are SPD, we use the sparse Cholesky factorization of the coefficient matrix combined with the symmetric approximate minimum degree  reordering \cite{SaadBook}. For this,  we have utilized the ``\verb"symamd"'' command of \textsc{Matlab}. 
On the other hand, considering \eqref{Eq07a},  in the implementation of the  PRESB preconditioner we should solve two subsystems with the coefficient matrices
	\begin{eqnarray}
		F+G     &=& (1+ i\omega \sqrt{\nu} )M +\sqrt{\nu} K, \\
		F+G^* &=& (1-i\omega \sqrt{\nu} )M +\sqrt{\nu} K. 
	\end{eqnarray} 
	These  systems are solved using the LU factorization in conjunction with the approximate minimum degree  reordering. To do so, we utilize the ``\verb"amd"'' command of \textsc{Matlab}. As we will see shortly, since the LU factors of $ F+G$ and  $F+G^*$ are complex, the elapsed CPU of solving the corresponding systems would be considerably large, particularly in the three dimensional case.

We use a zero vector as an initial guess, and the iteration is terminated as soon as the residual 2-norm   of the system \eqref{RC1} is reduced by a factor of $10^8$. A maximum number of 1000 iterations is set to terminate the iterations. In the tables, a dagger ($\dag$) shows that the GMRES method has not converged in the maximum number of iterations.

All the numerical results have been computed by some  \textsc{Matlab} codes   with a Laptop with 2.40 GHz central processing unit (Intel(R) Core(TM) i7-5500), 8 GB memory and Windows 10 operating system.

We first investigate the problem in two dimensional case.  Figure \ref{Fig1} displays the eigenvalue distribution of the matrix $\mathbf{R}_{MPRESB}^{-1}\mathbf{A}$ for $h=2^{-4}$ (the size of the matrix $\mathbf{A}$ is 450) and $\omega=10$ for  $\nu=10^{-k}$, $k=2,4,6,8$. As we see the eigenvalues of the preconditioned matrix are well-clustered.

Numerical results for  $h=2^{-7},2^{-8},2^{-9}$ are presented in Tables \ref{Tbl1}--\ref{Tbl3} for different values of $\omega$. In this case,  the size of  the matrices are $32258$, $130050$ and $522242$, respectively.   In each table we present the number of iterations for the convergence and  the elapsed CPU time (in second and in parenthesis). As the numerical results show when $\sqrt{\nu}\omega$ is small enough ($\sqrt{\nu}\omega \leq 10$), the MPRESB method is is more efficient than the others. For $\sqrt{\nu}\omega \leq 10$, the MPRESB can not competes with the other preconditioners.  Another observation which can be posed here  is that for $\sqrt{\nu}\omega \leq 10$, the number of iterations of MPRESB is close to that of PRESB.

We now present the numerical results in three dimensional case. We set $h=2^{-4},2^{-5}$ and the numerical experiments are given in Tables \ref{Tbl4} and \ref{Tbl5}. In this case,  the size of  the matrices are $6750$ and $59582$. The numerical results show that all the observations in two dimensional can  also be expressed here. In addition, as we see the elapsed CPU time of MPRESB are very large for $h=2^{-5}$. Apparently, this is due to the computation of  the LU   factorization of the matrices $F+G$ and $F+G^*$ and their implementations for solving the subsystems.  

\begin{figure}
	\centering
	\includegraphics[height=7cm,width=8.cm]{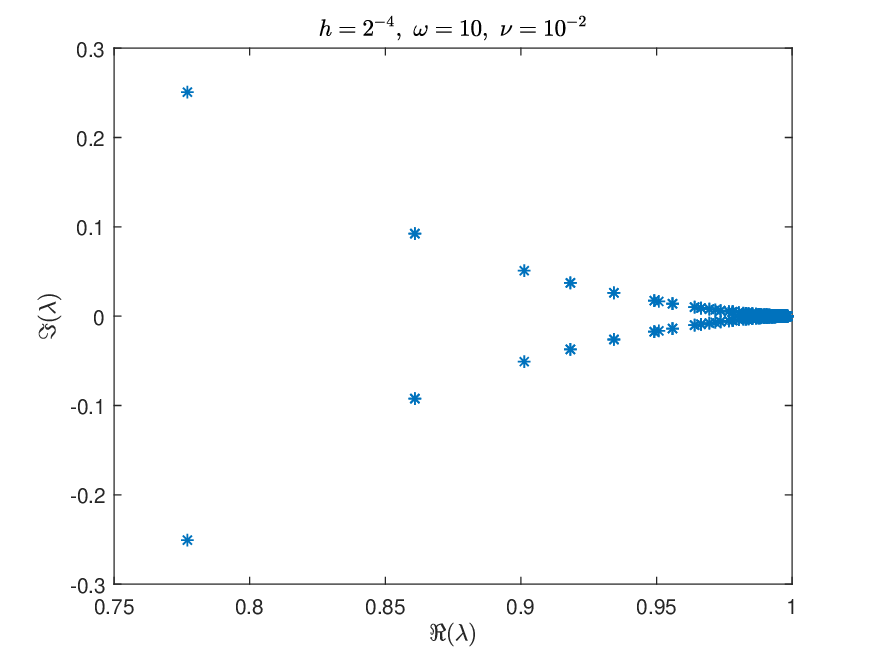}~~~\includegraphics[height=7cm,width=8cm]{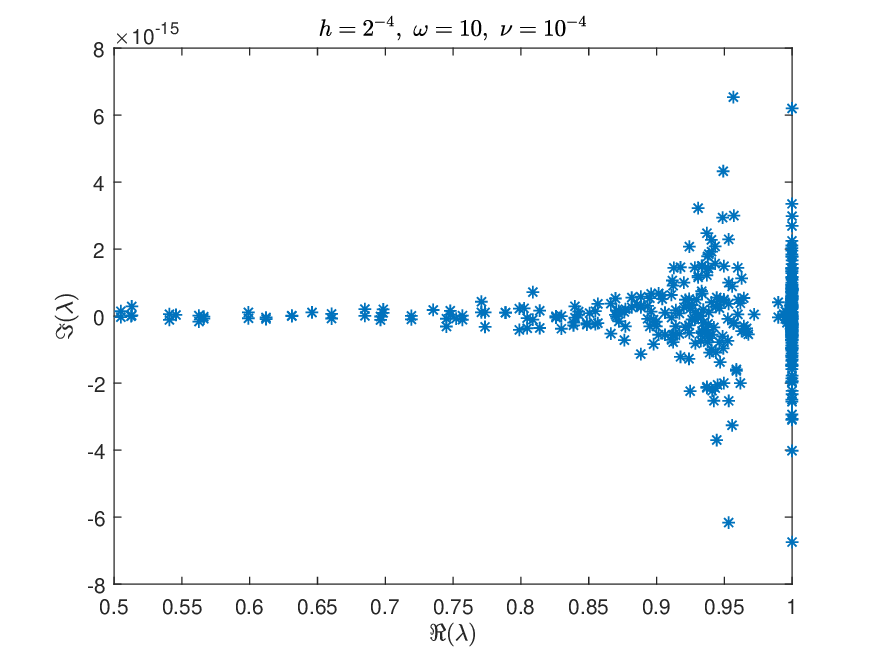} \\
	\includegraphics[height=7cm,width=8.cm]{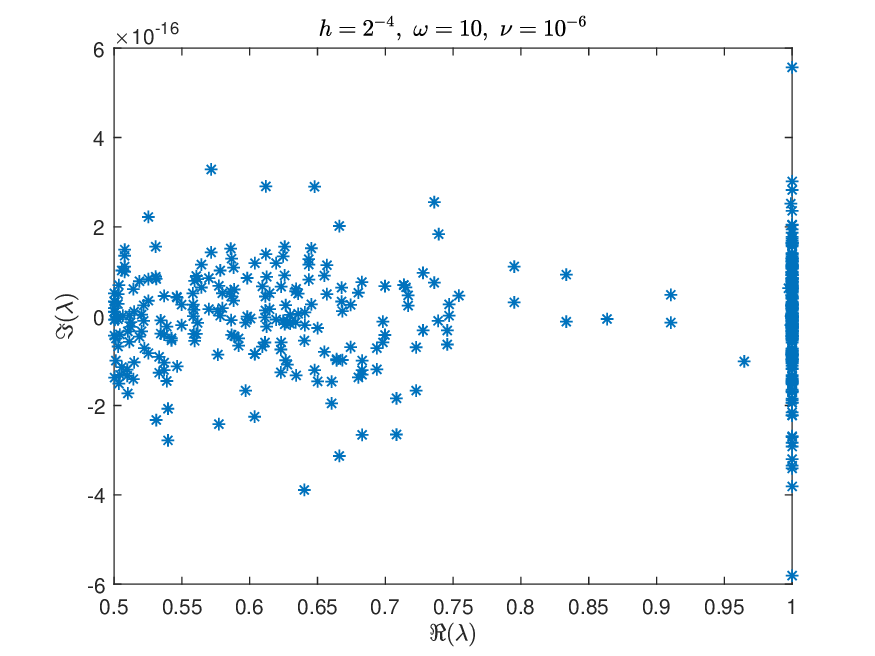}~~~\includegraphics[height=7cm,width=8cm]{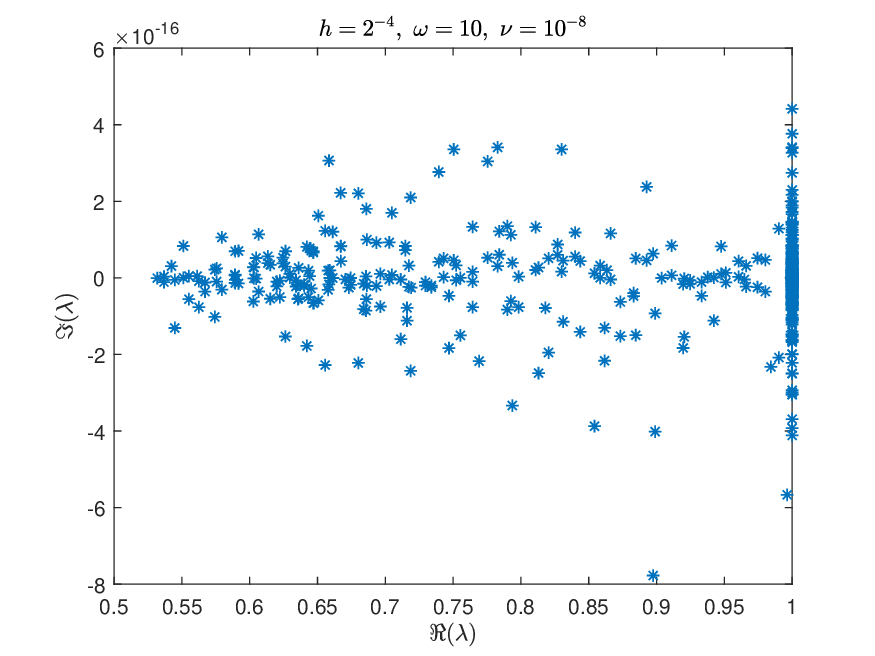} \\
	\caption{Eigenvalue distribution of  $\mathbf{R}_{MPRESB}^{-1}\mathbf{A}$ in two dimensional case for $h=2^{-4}$ and $\omega=10$ for  $\nu=10^{-k}$, $k=2,4,6,8$.	\label{Fig1}}
\end{figure}

\begin{table}[!htp]
	\centering\caption{Number of iterations of the methods along with the elapsed CPU time (in parenthesis) in two dimensional case  for $h=2^{-7}$ and different values of $\nu$ and $\omega$. \label{Tbl1}}
	\begin{center}
		\label{exact}
		%\scriptsize
		\scalebox{1.}{
			\begin{tabular}{|c|c|c|c|c|c|c|c|c|c|} \hline
				&$ \nu\setminus\omega$    & $10^{-2}$  & $10^{-1}$ & $1$  & $10^{1}$ & $10^{2}$  & $10^3$ & $10^4$\\ \hline			
				%				No Prec.     & $10^{-2}$   & $\dag$      & $\dag$        & $\dag$      & $\dag$     & $\dag$  \\
				%				& $10^{-4}$   & $\dag$      & $\dag$        & $\dag$      & $\dag$      & $\dag$  \\
				%				& $10^{-6}$   & 1303(14.79)      & 1303(14.76)        & 1303(14.76)      & 1303(14.69)      &  1303(14.74)  \\
				%				& $10^{-8}$   & 146(1.84) & 146(1.87)   & 146(1.89) & 146(1.91) & 146(1.86)  \\     \hline
				
				$\mathbf{R}_{MPRESB}$       & $10^{-2}$   & 9(0.36)        & 9(0.35)         & 9(0.37)     & 10 (0.40)  & 24(0.79)  & 246(6.37) & $\dag$\\
				& $10^{-4}$   & 12(0.45)       & 12(0.44)      & 12(0.44)   & 12(0.44)    & 18(0.63)  & 139(3.66) &$\dag$\\
				& $10^{-6}$   & 12(0.44)       & 12(0.45)      & 12(0.44)   & 12(0.43)    & 12(0.46)  &27(0.83)& 248(6.29)\\
				& $10^{-8}$   & 11(0.40)       & 11(0.41)      & 11(0.43)  & 11(0.42)     & 11(0.44)  &12(0.42)& 27(0.80)\\ \hline

				$\mathbf{Q}_{PRESB}$       & $10^{-2}$   & 9(0.83)        & 9(0.78)         & 9(0.75)     & 9 (0.76)     & 7(0.70)    &5(0.67)   &4 (0.64)  \\
				& $10^{-4}$   & 12(0.86)       & 12(0.87)      & 12(0.86)   & 12(0.86)    & 11(0.84)  &6(0.70)   &4(0.68)   \\
				& $10^{-6}$   & 12(0.84)       & 12(0.85)      & 12(0.83)   & 12(0.84)    & 12(0.86)  &10(0.79) &6(0.70) \\
				& $10^{-8}$   & 11(0.80)       & 11(0.85)      & 11(0.81)  & 11(0.82)     & 11(0.81)  &11(0.82) &10(0.77) \\ \hline

				$\mathbf{P}_{BD}$   & $10^{-2}$   & 20(0.58)         & 20(0.61)        & 20(0.67)     & 22(0.67)     & 26(0.74)  & 22(0.64)&22(0.64) \\
				& $10^{-4}$   & 56(1.45)         & 56(1.44)        & 56(1.49)     & 58(1.56)     & 48(1.28)  & 26(0.69)&22(0.65) \\
				& $10^{-6}$   & 61(1.61)         & 61(1.64)        & 61(1.61)     & 61(1.61)     & 62(1.63)  & 50(1.31)&24(0.63) \\
				& $10^{-8}$    & 54(1.41)         & 54(1.36)        & 54(1.41)     & 54(1.46)     & 54(1.42) & 54(1.45) &44(1.24)\\    \hline

				$\mathbf{P}_{BAS}$  & $10^{-2}$   & 16(0.48)         & 16(0.48)        & 16(0.50)     & 18(0.52)     & 54(1.43)  & 52(1.59) & 43(1.25)\\
				& $10^{-4}$   & 22(0.66)         & 22(0.68)        & 22(0.67)     & 22(0.67)     & 50(1.36)  & 65(1.62) & 47(1.19)\\
				& $10^{-6}$   & 22(0.68)         & 22(0.65)        & 22(0.68)     & 22(0.68)     & 26(0.73)  & 70(1.84) & 57(1.45)\\
				& $10^{-8}$   & 22(0.65)         & 22(0.67)        & 22(0.67)     & 22(0.66)     & 22(0.66)  &26(0.75) & 64 (1.70)\\
				\hline
				
		\end{tabular}}
	\end{center}
\end{table}

\begin{table}[!htp]
	\centering\caption{Number of iterations of the methods along with the elapsed CPU time (in parenthesis) in two dimensional case  for $h=2^{-8}$ and different values of $\nu$ and $\omega$. \label{Tbl2}}
	\begin{center}
		\label{exact}
		%\scriptsize
		\scalebox{1.}{
			\begin{tabular}{|c|c|c|c|c|c|c|c|c|c|} \hline
				&$\nu\setminus\omega$    & $10^{-2}$  & $10^{-1}$ & $1$  & $10^{1}$ & $10^{2}$ & $10^3$ & $10^4$\\ \hline			
				%				No Prec.     & $10^{-2}$   & $\dag$      & $\dag$        & $\dag$      & $\dag$     & $\dag$  \\
				%				& $10^{-4}$   & $\dag$      & $\dag$        & $\dag$      & $\dag$      & $\dag$  \\
				%				& $10^{-6}$   & $\dag$      & $\dag$        & $\dag$      & $\dag$      & $\dag$  \\
				%				& $10^{-8}$   & 523(26.15) & 523(25.77)  & 523(25.70) & 523(25.86) & 523(25.58)  \\     \hline

				$\mathbf{R}_{MPRESB}$       & $10^{-2}$   & 9(1.30)        & 9(1.30)         & 9(1.32)     & 10 (1.39)  & 24(3.12) & 251(27.84)& $\dag$\\
				& $10^{-4}$   & 12(1.62)       & 12(1.62)      & 12(1.62)   & 12(1.66)    & 18(2.41) & 139(16.42)& $\dag$ \\
				& $10^{-6}$   & 12(1.63)       & 12(1.64)      & 12(1.62)   & 12(1.63)    & 12(1.67) & 27(3.39)   & 254(28.25) \\
				& $10^{-8}$   & 11(1.51)       & 11(1.49)      & 11(1.54)  & 11(1.50)     & 11(1.53) & 12(1.58) & 27(3.35) \\ \hline
				
				$\mathbf{Q}_{PRESB}$         & $10^{-2}$   & 9(6.05)        & 9(6.20)         & 9(6.11)     & 9 (6.08)    & 7(5.97)   & 5(5.79)& 4(5.55)\\
				& $10^{-4}$   & 12(6.50)       & 12(6.56)      & 12(6.38)   & 12(6.45)    & 11(6.23) & 7(5.90)& 4 (5.60)\\
				& $10^{-6}$   & 12(6.39)       & 12(6.43)      & 12(6.29)   & 12(6.40)    & 12(6.34) & 11(6.19)& 6(5.77) \\
				& $10^{-8}$   & 11(6.27)       & 11(6.37)      & 11(6.24)  & 11(6.36)     & 11(6.32) &11(6.27)& 10(6.23)\\ \hline
				
				$\mathbf{P}_{BD}$  & $10^{-2}$   & 20(2.57)         & 20(2.58)        & 20(2.57)     & 22(2.84)     & 26(3.09)   &22(2.72)& 22(2.74)\\
				& $10^{-4}$   & 56(6.32)          & 56(6.39)        & 56(6.41)     & 58(6.67)     & 49(5.63)  &26(3.02)& 22(2.75)\\
				& $10^{-6}$   & 62(7.16)          & 62(7.18)        & 62(7.12)     & 62(7.19)     & 63(7.18)  &50(5.67)& 24(2.89) \\
				& $10^{-8}$    & 57(7.10)         & 57(6.50)        & 57(6.56)     & 57(6.62)     & 57(6.51)  &58(6.67)& 48(5.46)\\    \hline

				$\mathbf{P}_{BAS}$   & $10^{-2}$   & 16(2.03)         & 16(2.05)        & 16(2.05)     & 18(2.31)     & 54(6.26)  &53(5.91)& 45(5.17) \\
				& $10^{-4}$   & 22(2.83)         & 22(2.84)        & 22(2.83)     & 22(2.83)     & 50(5.75)  &65(7.33)& 50(5.63)\\
				& $10^{-6}$   & 22(2.90)         & 22(2.83)        & 22(2.84)     & 22(2.83)     & 26(3.17)   &71(8.05)& 62(7.02)\\
				& $10^{-8}$   & 22(2.98)         & 22(2.82)        & 22(2.84)     & 22(2.84)     & 22(2.85)   &26(3.18)& 68(7.69)\\
				\hline

		\end{tabular}}
	\end{center}
\end{table}

\begin{table}[!htp]
	\centering\caption{Number of iterations of the methods along with the elapsed CPU time (in parenthesis) in two dimensional case  for $h=2^{-9}$ and different values of $\nu$ and $\omega$. \label{Tbl3}}
	\begin{center}
		\label{exact}
		%\scriptsize
		\scalebox{1.}{
			\begin{tabular}{|c|c|c|c|c|c|c|c|c|c|} \hline
				&$\nu\setminus\omega$    & $10^{-2}$  & $10^{-1}$ & $1$  & $10^{1}$ & $10^{2}$ & $10^{3}$ &  $10^{4}$\\ \hline			
				%				No Prec.     & $10^{-2}$   & $\dag$      & $\dag$        & $\dag$      & $\dag$     & $\dag$  \\
				%				& $10^{-4}$   & $\dag$      & $\dag$        & $\dag$      & $\dag$      & $\dag$  \\
				%				& $10^{-6}$   & $\dag$      & $\dag$        & $\dag$      & $\dag$      & $\dag$  \\
				%				& $10^{-6}$   & $\dag$      & $\dag$        & $\dag$      & $\dag$      & $\dag$  \\     \hline

				$\mathbf{R}_{MPRESB}$   & $10^{-2}$   & 9(6.33)    & 9(6.51)   & 9(6.38)     & 10 (6.39)    & 24(15.73)  & 252(148.05) & $\dag$\\
				& $10^{-4}$   & 12(8.09)  & 12(8.15) & 12(8.10)   & 12(8.20)     & 18(12.07)  &139(83.42) &$\dag$\\
				& $10^{-6}$   & 12(8.14)  & 12(8.18) & 12(8.12)   & 12(8.12)     & 12(8.14)    & 27(17.41) &257(150.03)\\
				& $10^{-8}$   & 11(7.54)  & 11(7.49) & 11(7.56)   & 11(7.48)     & 11(7.56)    &12(8.13)&  27(17.41) \\ \hline
				
				$\mathbf{Q}_{PRESB}$ & $10^{-2}$   & 9(52.04)        & 9(51.67)         & 9(52.82)     & 9 (53.02)    & 7(52.15)    & 5(50.82)& 4(50.37)\\
				& $10^{-4}$   & 12(54.34)       & 12(54.42)      & 12(56.36)   & 12(57.05)    & 11(54.96) & 7(52.64)& 5 (52.24)\\
				& $10^{-6}$   & 12(53.97)       & 12(53.42)      & 12(55.47)   & 12(54.81)    & 12(55.84) & 11(53.92)& 6(51.48) \\
				& $10^{-8}$   & 11(52.24)       & 11(52.53)      & 11(53.80)  & 11(55.01)     & 11(54.25) &11(55.13)& 10(53.66)\\ \hline

				$\mathbf{P}_{BD}$  & $10^{-2}$   & 20(13.43)          & 20(13.36)        & 20(13.32)     & 22(14.63)     & 26(16.39) &22(14.11)& 22(14.15) \\
				& $10^{-4}$   & 56(33.90)          & 56(33.78)        & 56(33.94)     & 58(35.37)     & 49(29.46) &26(16.07)& 22(14.70) \\
				& $10^{-6}$   & 62(38.07)          & 62(37.55)        & 62(37.60)     & 62(37.75)     & 63(38.02) &50(30.02)&26(16.03) \\
				& $10^{-8}$    & 59(35.88)         & 59(36.29)        & 59(35.74)     & 59(35.77)     & 59(35.48) &60(36.46)& 48(29.15)\\    \hline

				$\mathbf{P}_{BAS}$   & $10^{-2}$   & 16(10.68)         & 16(10.69)        & 16(10.71)     & 18(12.05)     & 54(32.83)&53(31.54)&47(27.85) \\
				& $10^{-4}$   & 22(14.83)         & 22(14.79)        & 22(14.87)     & 22(14.72)     & 50(30.14)&65(38.43)& 51(31.12)\\
				& $10^{-6}$   & 22(15.18)         & 22(14.73)        & 22(14.85)     & 22(14.87)     & 26(16.52)&72(42.44)& 63(37.91) \\
				& $10^{-8}$   & 22(14.73)         & 22(14.85)        & 22(14.77)     & 22(14.85)     & 22(14.75)&26(16.62)& 70(41.43)  \\
				\hline

		\end{tabular}}
	\end{center}
\end{table}

\begin{table}[!htp]
	\centering\caption{Number of iterations of the methods along with the elapsed CPU time (in parenthesis) in three dimensional case  for $h=2^{-4}$ and different values of $\nu$ and $\omega$. \label{Tbl4}}
	\begin{center}
		\label{exact}
		%\scriptsize
		\scalebox{1.}{
			\begin{tabular}{|c|c|c|c|c|c|c|c|c|c|} \hline
				&$\nu\setminus\omega$    & $10^{-2}$  & $10^{-1}$ & $1$  & $10^{1}$ & $10^{2}$ & $10^{3}$ &  $10^{4}$\\ \hline			
				
				$\mathbf{R}_{MPRESB}$   
				& $10^{-2}$   & 9(0.19)    & 9(0.18)   & 9(0.17)     & 9 (0.18)    & 25(0.39)    & 239(2.93 )& $\dag$\\
				& $10^{-4}$   & 12(0.20)  & 12(0.20) & 12(0.21)   & 12(0.19)     & 18(0.29)  & 128(1.53) &$\dag$\\
				& $10^{-6}$   & 10(0.19)  & 10(0.19) & 10(0.18)   & 10(0.19)     & 12(0.20)  &26(0.39)    &106(1.32)\\
				& $10^{-8}$  & 8(0.15)    & 8(0.16)   & 8(0.15)      & 9(0.16)     & 9(0.18)     & 10(0.16)    & 17(0.29) \\ \hline
				
				$\mathbf{Q}_{PRESB}$ 
				& $10^{-2}$   & 9(1.38)     & 9(1.33)     & 9(1.35)     & 9 (1.33)     & 7(1.30)    & 4(1.34)& 3(1.48)\\
				& $10^{-4}$   & 12(1.39)   & 12(1.37)    & 12(1.46)   & 12(1.38)    & 10(1.40) & 6(1.31)& 4(1.37)\\
				& $10^{-6}$   & 10(1.33)   & 10(1.34)    & 10(1.37)   & 10(1.35)    & 10(1.33) & 10(1.36)& 5(1.44) \\
				& $10^{-8}$   & 8(1.33)     & 8(1.32)      & 8(1.35)     & 8(1.32)      & 9(1.38)   &9(1.37)&  8(1.43)\\ \hline

				$\mathbf{P}_{BD}$  
				& $10^{-2}$   & 18(0.28)   & 18(0.26)     & 18(0.27)     & 20(0.30) &24(0.35)&  20(0.30)& 18(0.28)\\
				& $10^{-4}$   & 51(0.64)   & 51(0.64)     & 52(0.63)     & 52(0.63) &44(0.55)& 20(0.30) & 18(0.26)\\
				& $10^{-6}$   & 45(0.58)   & 45(0.56)     & 45(0.60)     & 45(0.56) &45(0.57)& 34(0.44) & 18(0.29)\\
				& $10^{-8}$   & 13(0.19)   & 13(0.19)     & 13(0.20)     & 13(0.21) &13(0.19)& 13(0.18) &14(0.22)\\    \hline

				$\mathbf{P}_{BAS}$   
				& $10^{-2}$   & 16(0.26)   & 16(0.23)        & 15(0.22)     & 16(0.24)     & 48(0.62)&39(0.49)& 11(0.18)\\
				& $10^{-4}$   & 21(0.33)   & 21(0.32)        & 20(0.30)     & 22(0.32)     & 44(0.56)&48(0.62)& 12(0.20)\\
				& $10^{-6}$   & 20(0.30)   & 20(0.29)        & 20(0.29)     & 20(0.29)     & 24(0.34)&54(0.68)&  15(0.22)\\
				& $10^{-8}$   & 18(0.26)   & 18(0.26)        & 18(0.26)     & 18(0.27)     & 18(0.26)&20(0.27)&  22(0.34)\\
				\hline

		\end{tabular}}
	\end{center}
\end{table}

\begin{table}[!htp]
	\centering\caption{3D:Number of iterations of the methods along with the elapsed CPU time (in parenthesis) in three dimensional case for $h=2^{-5}$ and different values of $\nu$ and $\omega$. \label{Tbl5}}
	\begin{center}
		\label{exact}
		%\scriptsize
		\scalebox{1.}{
			\begin{tabular}{|c|c|c|c|c|c|c|c|c|c|} \hline
				&$\nu\setminus\omega$    & $10^{-2}$  & $10^{-1}$ & $1$  & $10^{1}$ & $10^{2}$ & $10^{3}$ &  $10^{4}$\\ \hline			
				
				$\mathbf{R}_{MPRESB}$   
				& $10^{-2}$   & 9(2.86)       & 9(2.76)   & 9(2.73)     & 9 (2.74)    & 25(5.81)    & 268(49.82 )&  $\dag$\\
				& $10^{-4}$   & 12(3.29)     & 12(3.28) & 12(3.28)   & 12(3.26)   & 18(4.40)    & 137(26.37) & $\dag$\\
				& $10^{-6}$   & 11(3.08)     & 11(3.15) & 11(3.69)   & 11(3.17)   & 12(3.33)    &27(6.17)      &190(36.17)\\
				& $10^{-8}$  & 10(2.89)      & 10(3.15) & 10(4.04)   & 10(3.73)  & 10(3.65)     & 11(3.97)     &24(5.50)\\ \hline
				
				$\mathbf{Q}_{PRESB}$ 
				& $10^{-2}$   & 9(128.12)    & 9(128.80)   & 9(128.27)     & 9 (129.17)    & 7(128.08)    & 4(126.29 )& 4(128.45)\\
				& $10^{-4}$   & 12(130.55)  & 12(128.11) & 12(128.74)   & 12(130.36)   & 11(129.76)  & 6(131.46) &4(130.79)\\
				& $10^{-6}$   & 11(130.32)  & 11(127.86) & 11(129.06)   & 11(130.14)   & 11(127.94)  &10(128.83) & 6(127.89)\\
				& $10^{-8}$  & 10(127.88)   & 10(132.48) & 10(129.90)   & 10(129.86)  & 10(131.00)   & 10(129.99)  &9(129.71)\\ \hline

				$\mathbf{P}_{BD}$  
				& $10^{-2}$   & 18(4.34)     & 18(4.36)       & 18(4.40)      & 20(4.81)   & 24(5.63) &  20(4.64)& 22(5.31) \\
				& $10^{-4}$   & 52(10.62)   & 52(10.44)     & 53(10.89)    & 54(10.79) & 46(9.53) & 24(5.47) & 22(5.25) \\
				& $10^{-6}$   & 55(10.98)   & 55(11.06)     & 55(11.06)    & 55(11.21) &56(11.22)& 44(9.10) & 20(4.68)\\
				& $10^{-8}$   & 30(6.64)     & 30(6.60)       & 30(7.19)      & 30(8.03)   &30(8.04) & 30(7.96) & 22(5.09)\\    \hline

				$\mathbf{P}_{BAS}$   
				& $10^{-2}$   & 16(4.00)   & 16(4.07)        & 15(3.88)     & 16(4.00)     & 50(10.34) & 48(9.81)& 23(5.44)\\
				& $10^{-4}$   & 22(5.43)   & 22(5.36)        & 22(5.25)     & 22(5.30)     & 44(9.22)  & 58(11.77)& 26(5.95)\\
				& $10^{-6}$   & 20(4.75)   & 20(4.83)        & 20(4.86)     & 20(4.82)     & 24(5.76)  & 64(12.89)& 32(6.97) \\
				& $10^{-8}$   & 22(5.25)   & 22(5.30)        & 22(5.52)     & 22(6.49)     & 22(6.43)  & 24(6.79)&  38(7.88)\\
				\hline

		\end{tabular}}
	\end{center}
\end{table}

\section{Conclusion}\label{Sec5}

We have modified the PRESB preconditioner to  a class of  non-Hermitian  two-by-two complex system of linear equations. Spectral analysis of the preconditioned matrix has been analyzed. We utilized the proposed preconditioner to the systems arising from finite element discretization of  PDE-constrained optimization problems. Numerical results show that the proposed preconditioner is efficient and in comparison to two well-known precoditioners is more efficient.

\section*{Confict of interest}

The authors declare that there is  no conflict of interest.

\section*{Acknowledgements}

The authors would like to thank Prof. J\'{a}nos Kar\'{a}tson (E\"{o}tv\"{a}s Lor\'{a}nd University, Hungary) for his helpful comments and suggestions on an earlier draft of the paper.


\begin{thebibliography}{}
\bibitem{Arridge} S.R. Arridge, Optical tomography in medical imaging, \textit{Inverse Problems} 15 (1999) 41--93.

\bibitem{Axe1}  O. Axelsson, S.  Farouq, and M. Neytcheva, Comparison of preconditioned Krylov subspace iteration
methods for PDE-constrained optimization problems. Poisson and convection-diffusion control,
\textit{Numer. Algorithms},  73 (2016) 631--663.

\bibitem{Axe2} O. Axelsson, S. Farouq, M. Neytcheva,  Comparison of preconditioned Krylov subspace iteration
methods for PDE-constrained optimization problems. Stokes Control,  \textit{Numer. Algorithms} 74 (2017) 19--37.

\bibitem{Axe3} O. Axelsson, S. Farouq, M. Neytcheva,  A preconditioner for optimal control problems constrained
by Stokes equation with a time-harmonic control,  \textit{J. Comput. Appl. Math.}  310 (2017)  5--18.

\bibitem{Axe4} O. Axelsson, D.K. Salkuyeh, A new version of a preconditioning method for certain two-by-two block matrices with square blocks, \textit{BIT Numer. Math.} 59 (2019) 321--342.


\bibitem{Lukas} O. Axelsson,  D. Luk$\rm\acute{a}\breve{s}$, Preconditioning methods for eddy-current optimally controlled time-harmonic electromagnetic problems, \textit{J. Numer. Math.} 27 (2019) 1-21. 

\bibitem{Owe-Janos} O. Axelsson,   J. Kar\'{a}tson, Superior properties of the PRESB preconditioner for
operators on two-by-two block form with square blocks, \textit{Numer. Math.} 146 (2020) 335--368.


\bibitem{AxK} O. Axelsson, J. Kar\'{a}tson, Equivalent operator preconditioning for elliptic problems,
\textit{Numer. Algorithms} 50 (2009) 297--380.


\bibitem{AKM} O. Axelsson, J. Kar\'{a}tson, F. Magoul\`{e}s,  Superlinear convergence using block precon-
ditioners for the real system formulation of complex Helmholtz equations, \textit{J. Comput. Appl. Math.} 340 (2018) 424--431.


\bibitem{AKN} O. Axelsson, J. Kar\'{a}tson, M. Neytcheva,  Preconditioned iterative solution methods for
linear systems arising in PDE-constrained optimization, in: Robust and Constrained Optimization: Methods and Applications. Nova Science Publishers, pp. 85--148.

\bibitem{BaiBook}  Z.-Z. Bai, J.-Y. Pan, Matrix Analysis and Computations, SIAM, Philadelphia, 2021.


\bibitem{Bai1} Z.-Z. Bai, G.H. Golub, M.K. Ng, Hermitian and skew-Hermitian splitting methods for non-Hermitian positive definite linear systems, \textit{SIAM. J. Matrix Anal. Appl.} 24 (2003) 603--626.

\bibitem{Bai2} Z.-Z. Bai, M. Benzi, F. Chen, Modified HSS iteration methods for a class of complex symmetric linear systems, \textit{Computing} 87 (2010) 93--111.

\bibitem{Bai3}Z.-Z. Bai, M. Benzi, F. Chen, On preconditioned MHSS iteration methods for complex symmetric linear systems, \textit{Numer. Algorithms} 56 (2011) 297--317.

\bibitem{Bertaccini} D. Bertaccini, Efficient solvers for sequences of complex symmetric linear system, \textit{Electron. Trans. Numer. Anal.} 18 (2004) 49--64.

\bibitem{BenziFV} M. Benzi, Some uses of the field of values in numerical analysis, Bolletino dell Unione Mat. Ital.  14 (2021) 159--177.

\bibitem{BenziJCP} M. Benzi, Preconditioning techniques for large linear systems: a survey, J. Comput. Phys. 182  (2002) 418--477.


\bibitem{EES} S.C. Eisenstat, H.C. Elman, M.H. Schultz,  Variational iterative methods for nonsymmetric systems of linear equations, SIAM J. Numer. Anal.  20 (1983) 345--357.


\bibitem{Elman} H. Elman, D. Silvester, A.J. Wathen, Finite Elements and Fast Iterative Solvers with Applications in Incompressible Fluid Dynamics, Oxford University Press, 2005.


\bibitem{Feriani} A. Feriani, F. Perotti, V. Simoncini, Iterative system solvers for the frequency analysis of linear mechanical systems, \textit{Comput. Methods Appl. Mech. Engrg.} 190 (2000) 1719--1739.


\bibitem{Frommer} A. Frommer, T. Lippert, B. Medeke, K. Schilling, Numerical challenges in lattice quantum chromodynamics, \textit{Lecture notes in computational science and engineering} 15 (2000) 1719--1739.

\bibitem{Hezari-NLA} D. Hezari, V. Edalatpour, D.K. Salkuyeh, Preconditioned GSOR iterative method for a class of complex symmetric system of linear equations, \textit{Numer. Linear Algebra  Appl.} 22 (4) 761--776.

\bibitem{SCSP} D. Hezari, D.K. Salkuyeh, V. Edalatpour, A new iterative method for solving a class of complex symmetric system of linear equations, \textit{Numer. Algorithms} 73 (2016) 927--955

\bibitem{Krendl}  W. Krendl, V. Simoncini, W. Zulehner, Stability estimates and structural spectral properties of saddle point problems, \textit{Numer. Math.} {\bf 124} (2013), 183--213.

\bibitem{Liang} Z.-Z. Liang, O. Axelsson, M. Neytcheva, A robust structured preconditioner for time-harmonic
parabolic optimal control problems, \textit{Numer. Algorithms}  79 (2018) 575--596.

\bibitem{Moro} G. Moro, J.H. Freed, Calculation of ESR spectra and related Fokker-Planck forms by the use of the Lanczos algorithm, \textit{J. Chem. Phys.} 74 (1981) 3757--3773.

\bibitem{Poirier} B. Poirier, Effecient preconditioning scheme for block partitioned matrices with structured sparsity, \textit{Numer. Linear Algebra Appl.} 7 (2000) 715--726.

\bibitem{Rees} T. Rees, H.S. Dollar, A.J. Wathen, Optimal solvers for PDE-constrained optimization, \textit{SIAM J. Sci. Comput.} 32 (2010)
271--298.

\bibitem{SaadBook} Y. Saad,  \textit{Iterative Methods for Sparse Linear Systems}, PWS Press, New York, 1995.

\bibitem{GMRES} Y. Saad, M.H Schultz, GMRES: a generalized minimal residual algorithm for solving nonsymmetric
linear systems, \textit{SIAM J. Sci. Stat. Comput.}, 7 (1986), 856--869.

\bibitem{Salkuyeh1} D.K. Salkuyeh, D. Hezari, V. Edalatpour, Generalized successive overrelaxation iterative method for a class of complex symmetric linear system of equations, \textit{Int. J. Comput. Math.} 92 (2015) 802--815.

\bibitem{TSCSP} D.K. Salkuyeh, Two-step scale-splitting method for solving complex symmetric system of linear equations, arXiv:1705.02468.

\bibitem{TTSCSP} D.K. Salkuyeh, T.S. Siahkolaei,  Two-parameter TSCSP method for solving complex symmetric system
of linear equations, \textit{Calcolo} 55 (2018) 1--22.

\bibitem{Salkuyeh-Calcolo} D.K. Salkuyeh, A new iterative method for solving a class of two‑by‑two block complex linear systems, Calcolo 58 (2021) 42. 

\bibitem{Schmitt} D. Schmitt, B. Steffen, T. Weiland, 2D and 3D computations of lossy eigenvalue problems, \textit{IEEE Trans. Magn.} 30 (1994) 3578-3581.

\bibitem{CRI} T. Wang, Q.-Q. Zheng, L.-Z. Lu,  A new iteration method for a class of complex symmetric linear
systems. \textit{J. Comput. Appl. Math.} 325 (2017) 188--197.


\bibitem{Zeng} M.-L. Zeng, Respectively scaled splitting iteration method for a class of block 4-by-4 linear systems from eddy current
electromagnetic problems, \textit{Japan J.  Indust. Appl. Math.} 38 (2021) 489--501.


\bibitem{Zheng} Z. Zheng, G.-F. Zhang, M.-Z. Zhu, A block alternating splitting iteration method for a class of block two-by-two complex linear systems, \textit{J. Comput. Appl. Math.} 288 (2015) 203--214.

\end{thebibliography}
\end{document}